	\newcommand{\one}{\mathds{1}}
\numberwithin{equation}{section}
\newcommand{\eq}[1]{\begin{align*} #1 \end{align*}}
\newcommand{\eeq}[1]{\begin{align} \begin{split} #1 \end{split} \end{align}}
\newcommand{\N}{\mathbb{N}}
\renewcommand{\P}{\mathbb{P}}
\renewcommand{\a}{\mathcal{A}}
\newcommand{\s}{\mathcal{S}}
\newtheorem{thm}{Theorem}[section]
\newtheorem{lemma}[thm]{Lemma}
\theoremstyle{definition}
\newtheorem{defn}[thm]{Definition}
\newtheorem{remark}[thm]{Remark}
\newcommand{\vc}[1]{{\mathbf #1}}
\DeclarePairedDelimiter\ceil{\lceil}{\rceil}
\title{An upper bound on the size of avoidance couplings}
\subjclass[2010]{60J10, 
05C81} 
\keywords{Avoidance coupling}
\author{Erik Bates}
\author{Lisa Sauermann}
\thanks{Erik Bates's research was partially supported by NSF grant DGE-114747} 
\address{\newline Department of Mathematics \newline Stanford University \newline 450 Serra Mall, Bldg 380 \newline Stanford, CA 94305 \newline \textup{\tt ewbates@stanford.edu}, \textup{\tt lsauerma@stanford.edu}}
\begin{document}
\bibliographystyle{acm}

\begin{abstract}
We show that a coupling of non-colliding simple random walkers on the complete graph on $n$ vertices can include at most $n - \log n$ walkers.
This improves the only previously known upper bound of $n-2$ due to Angel, Holroyd, Martin, Wilson, and Winkler ({\it Electron.~Commun.~Probab.~18},  2013).
The proof considers couplings of i.i.d.~sequences of Bernoulli random variables satisfying a similar avoidance property, for which there is separate interest.
\end{abstract}

\maketitle

\section{Introduction and main results} \label{intro}
The notion of an avoidance coupling was introduced by Angel, Holroyd, Martin, Wilson, and Winkler \cite{angel-holroyd-martin-wilson-winkler13}.
Consider simple random walk on a finite graph $G$, in which a walker moves at each step by choosing an adjacent vertex uniformly at random.
The goal of an avoidance coupling is to couple two or more random walkers, who are restricted to move one at a time in cyclical order, in such a way that they never meet. 
At the same time, each walker must be performing a simple random walk on $G$ when viewed separately from the other walkers.
Clearly if an avoidance coupling of $k$ walkers can take place on a given graph, then so can one of $k-1$ walkers, simply by making one of the walkers invisible.
So given $G$, a natural question is, ``What is the maximum number of walkers in an avoidance coupling on $G$?"

Like Angel \textit{et al.}, we restrict ourselves to the case when $G = K_n$, the complete graph on $n$ vertices, or $G=K_n^*$, the complete graph with a loop at every vertex.
In this setting, the construction of avoidance couplings is made easier by symmetry, but also made harder by the great freedom each walker must have in moving.
Nevertheless, the authors of \cite{angel-holroyd-martin-wilson-winkler13} construct an avoidance coupling of $k$ walkers on $K_{2k+1}$, $K_{2k+1}^*$, and $K_{2k}^*$, when $k$ is a power of $2$.
This established a linear number of walkers on the complete graphs $K_n$ and $K_n^*$, but only for specific values of $n$.
In the looped case, though, they were able to prove that an avoidance coupling of $k$ walkers on $K_n^*$ could be extended to one on $K_{n+1}^*$, thereby making possible $\ceil{n/4}$ walkers on any $K_n^*$.
Later Feldheim \cite{feldheim17} proved this monotonicity principle in the loopless case, thereby extending the same linear lower bound to $K_n$.

In the way of upper bounds, less progress has been made.
The only previously known result \cite[Theorem 8.1]{angel-holroyd-martin-wilson-winkler13} says that there is no avoidance coupling of $n-1$ walkers on $K_n^*$ for $n\geq4$. 
This observation is trivial on the loopless graph $K_n$, where each of $n-1$ walkers could only move deterministically to the open site.
It would be satisfying to prove a linear upper bound of $cn$ with $c < 1$, as Angel \textit{et al.}~propose in \cite[Section 9]{angel-holroyd-martin-wilson-winkler13}.
A more manageable task, however, is to produce an upper bound $U(n)$ such that $n-U(n) \to \infty$, which we are able to do in Theorem \ref{ac_thm} below.
The result is obtained by considering couplings satisfying a weaker avoidance property described below, for which our  bound may be closer to the truth.

For a fixed $p \in (0,1)$, we say that $k$ coupled walkers on $K_2^* = \{0,1\}$ form a \textit{$1$-avoidance} if, while taking turns in cyclical order as before, no two walkers simultaneously occupy site $1$ but each walker's trajectory forms a sequence of i.i.d.~Bernoulli($p$) random variables.
That is, any given walker can be found at site $1$ with probability $p$ at any given turn, independent of all other turns, but only site $0$ can accommodate more than one walker.
For this scenario, we have the following result.

\begin{thm} \label{1a_thm}
If there exists a $1$-avoidance coupling of $k$ Bernoulli($p$) walkers, then $p(1-p\log p) \leq 1/k$.
\end{thm}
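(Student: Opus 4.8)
Pass to the stationary \emph{site process}. Write $Y_j(t)\in\{0,1\}$ for the position of walker $j$ after its $t$-th move, so that $(Y_j(t))_{t\in\Z}$ is i.i.d.\ Bernoulli($p$) for each $j$, and let $A_t\in\{0,1,\dots,k\}$ be the label of the walker occupying site $1$ after round $t$ (with $A_t=0$ when every walker is at site $0$). Since at most one walker is ever at site $1$, we have $A_t=j\iff Y_j(t)=1$, which gives: (i) $\P(A_t=j)=p$ for $1\le j\le k$ and $\P(A_t=0)=1-kp$; and (ii) for each fixed $j$ the process $\bigl(\one\{A_t=j\}\bigr)_t=\bigl(Y_j(t)\bigr)_t$ is i.i.d.\ Bernoulli($p$). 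Moreover, in round $t+1$ a walker can step onto site $1$ only when every other walker is currently at site $0$, and walkers $1,\dots,a-1$ move before walker $a$; this forces (iii): if $A_t=a\ge 1$ then $A_{t+1}\in\{0,a,a+1,\dots,k\}$. Hence the time axis decomposes into \emph{zero-runs} ($A\equiv0$) alternating with \emph{busy periods} (maximal intervals with $A\ge1$), and the occupied site is nondecreasing along each busy period.

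By (i), the claimed inequality $p(1-p\log p)\le 1/k$ is equivalent to $1-kp\ge kp^2\log(1/p)$, i.e.\ to $\P(A_t=0)\ge kp^2\log(1/p)$. To reach this I would rewrite the right-hand side using the Bernoulli structure of individual walkers. For a walker $j$ that is off at time $t$ put $\mathrm{age}_j(t)=t-\sup\{s<t:Y_j(s)=1\}$; by (ii) the off-blocks of walker $j$ are Geometric($p$), so $\P(\mathrm{age}_j(t)=m\mid Y_j(t)=0)=p(1-p)^{m-1}$, whence
\[
\E\!\left[\frac{\one\{Y_j(t)=0\}}{\mathrm{age}_j(t)}\right]=(1-p)\sum_{m\ge1}\frac{p(1-p)^{m-1}}{m}=p\sum_{m\ge1}\frac{(1-p)^m}{m}=p\log(1/p),
\]
so that $\sum_{j=1}^k\E\bigl[\one\{Y_j(t)=0\}/\mathrm{age}_j(t)\bigr]=kp\log(1/p)$. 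It therefore suffices to prove the clean-looking inequality
\[
\P(A_t=0)\ \ge\ p\sum_{j=1}^k\E\!\left[\frac{\one\{Y_j(t)=0\}}{\mathrm{age}_j(t)}\right].
\]

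The heart of the argument would be a combinatorial charging scheme on a long window $[1,T]$, passed to the limit. Summing $\one\{Y_j(t)=0\}/\mathrm{age}_j(t)$ over a single off-block of walker $j$ of length $m$ produces the harmonic number $H_m=1+\tfrac12+\cdots+\tfrac1m$, so by (ii) and ergodicity the right-hand side above equals $p\cdot\lim_{T\to\infty}T^{-1}\sum_j\sum_{\text{off-blocks of }j\text{ in }[1,T]}H_{(\text{length})}$. I would attach to each off-block a set of zero-rounds --- these sets pairwise disjoint over all off-blocks of all walkers --- with the set of an off-block of length $m$ having size at least $pH_m$; since by (i) there are only $\P(A_t=0)\cdot T+o(T)$ zero-rounds in $[1,T]$, disjointness then yields the displayed inequality and hence the theorem. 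A natural source for these sets is the trailing zero-run of each busy period: a busy period visiting sites $i_1<\cdots<i_r$ is followed by a zero-run that lies inside the off-block opened by that busy period at each of the sites $i_1,\dots,i_r$, so one can split that zero-run among these $r$ off-blocks, giving $i_s$ a $1/s$-weighted share so as to reflect the harmonic weights.

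I expect the last point --- verifying that the share accruing to a given off-block of length $m$ is at least $pH_m$ --- to be the main obstacle. Because an off-block length is Geometric($p$) but correlated with how the surrounding busy periods are arranged, and because one off-block may span several zero-runs of several busy periods, a purely local, off-block-by-off-block comparison fails; one instead needs a global exchange/conditioning argument that pools excess shares collected on short off-blocks to cover deficits on long ones, all while preserving disjointness. As a sanity check, when $k=1$ the target reduces to $1-p\ge p^2\log(1/p)$, which is true and holds with equality to leading order as $p\to1$ --- exactly the regime in which Theorem~\ref{1a_thm} is tight, consistent with the expectation that the bound is close to optimal.
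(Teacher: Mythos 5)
Your setup and reduction coincide with the paper's: you encode the coupling by the label of the walker at site $1$ (the paper's permissible sequence over the alphabet $[k]\cup\{\texttt{B}\}$), and you reduce the theorem, via the law of large numbers, to showing that the density of zero-rounds is at least $kp^2\log(1/p)$, with the same series $\sum_{m\ge1}(1-p)^m/m=\log(1/p)$ doing the bookkeeping. But the heart of the matter is exactly the step you defer. The charging scheme that would assign to each off-block of length $m$ at least $pH_m$ pairwise-disjoint zero-rounds is never constructed; you yourself note that the local, off-block-by-off-block comparison fails and that some unspecified ``global exchange/conditioning argument'' is needed. That step is not a routine verification --- your displayed inequality $\P(A_t=0)\ge p\sum_{j}\E\bigl[\one\{Y_j(t)=0\}/\mathrm{age}_j(t)\bigr]$ is, by your own computations, a literal restatement of $1-kp\ge kp^2\log(1/p)$, i.e.\ of Theorem \ref{1a_thm} itself --- so as written the proposal establishes nothing beyond the trivial bound $kp\le1$. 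Note also a structural obstruction to your plan: because the target inequality carries the factor $p$, it cannot be proved pathwise by any deterministic assignment of zero-rounds to off-blocks (a single realization does not know $p$), and the conditional law of the ages, given the arrangement of busy periods, is distorted by the coupling --- precisely the correlation problem you flag but do not resolve.

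The paper closes this gap by a different, and crucially deterministic, device. Instead of charging $1/\mathrm{age}$ (hence harmonic numbers), it gives each pair of consecutive occurrences of a symbol $i$ the weight $1/b$, where $b$ is the number of \emph{distinct} symbols strictly in between, and proves the purely combinatorial Lemma \ref{lisa_lemma}: for every finite permissible sequence, the total weight is at most the number of \texttt{B}'s. The proof is an induction that removes weight-$0$ and weight-$1$ pairs and then deletes a symbol $j$ whose ``input'' under a weight-redistribution bookkeeping is at least its ``output'', so that the total weight does not decrease while the number of blanks is unchanged. Probability enters only afterwards, through the elementary bound $1/b\ge 1/(\mathrm{gap}-1)$ and the law of large numbers for the geometric gaps, which yields the $-p^2\log p$ contribution per walker. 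To complete your approach you would need an analogue of that lemma (or of its redistribution argument); the proposal as it stands is a correct reformulation with the core argument missing.
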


We give the proof in Section \ref{proofs} after providing an overview of relevant literature in Section \ref{background}.
The connection between standard avoidance couplings and $1$-avoidance couplings is seen by tracking which walker on $K_n^*$, if any, currently occupies vertex $1$.
Any given simple random walker on $K_n^*$ will be found at vertex $1$ will probability $1/n$, independently of that walker's position at all other times.
Therefore, if there exists an avoidance coupling of $k$ walkers on $K_n^*$, then there is a $1$-avoidance coupling of $k$ Bernoulli($1/n$) walkers, an observation originally made in \cite[Lemma 5.2]{angel-holroyd-martin-wilson-winkler13}.
Therefore, Theorem \ref{1a_thm} easily produces the following result, as shown in Section \ref{proof_2}. 

\begin{thm} \label{ac_thm}
For any $n \geq 3$, an avoidance coupling on $K_n^*$ (and therefore on $K_n$) can have at most $\ceil{n-\log n}$ walkers.
\end{thm}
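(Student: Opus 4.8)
The plan is to apply Theorem~\ref{1a_thm} with $p=1/n$ and then read off the claimed bound by an elementary estimate. The bridge between the two settings is the one already noted above (see also \cite[Lemma~5.2]{angel-holroyd-martin-wilson-winkler13}): a simple random walker on $K_n^*$ sits at any prescribed vertex with probability $1/n$, independently from step to step, so recording which walker---if any---currently occupies vertex $1$ turns a $k$-walker avoidance coupling on $K_n^*$ into a $1$-avoidance coupling of $k$ Bernoulli($1/n$) walkers. Assuming such an avoidance coupling on $K_n^*$ exists, Theorem~\ref{1a_thm} with $p=1/n$ gives
\begin{equation*}
\frac1k\;\ge\;\frac1n\Bigl(1-\frac1n\log\frac1n\Bigr)\;=\;\frac{n+\log n}{n^2},
\end{equation*}
hence $k\le n^2/(n+\log n)$.

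It then remains to check that every integer $k$ with $k\le n^2/(n+\log n)$ satisfies $k\le\ceil{n-\log n}$; equivalently, that $n^2/(n+\log n)<\ceil{n-\log n}+1$. Here I would use the identity
\begin{equation*}
\frac{n^2}{n+\log n}\;=\;(n-\log n)+\frac{(\log n)^2}{n+\log n}
\end{equation*}
together with the elementary bound $(\log n)^2<n\le n+\log n$, valid for all $n\ge1$ (for instance because $x\mapsto(\log x)^2/x$ is maximized on $(1,\infty)$ at $x=e^2$, where its value is $4e^{-2}<1$). The remainder term is then strictly smaller than $1$, so $n^2/(n+\log n)<(n-\log n)+1\le\ceil{n-\log n}+1$, which is exactly what was needed. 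This establishes the bound for $K_n^*$.

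For the loopless graph $K_n$ I would invoke the comparison between the two families from \cite{angel-holroyd-martin-wilson-winkler13}: loops can only help the walkers avoid one another, and an avoidance coupling of $k$ walkers on $K_n$ can be turned into one of $k$ walkers on $K_n^*$ by inserting an independent holding step of probability $1/n$ before each move, since simple random walk on $K_n^*$ is exactly simple random walk on $K_n$ lazified at rate $1/n$. The bound $\ceil{n-\log n}$ obtained above therefore transfers to $K_n$ as well.

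Since Theorem~\ref{1a_thm} carries all the analytic weight, this deduction amounts to bookkeeping, and I do not expect a genuine obstacle. The two points that do require a little care are (i) keeping the ceiling estimate sharp enough to land on $\ceil{n-\log n}$ rather than $\ceil{n-\log n}+1$, which is why one wants the exact remainder identity above rather than a cruder bound on $n^2/(n+\log n)$; and (ii) quoting the $K_n$-versus-$K_n^*$ comparison in a form that respects the cyclic order in which the walkers move.
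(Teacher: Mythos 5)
Your proposal is correct and follows essentially the same route as the paper: convert the avoidance coupling on $K_n^*$ into a $1$-avoidance coupling of Bernoulli($1/n$) walkers via \cite[Lemma~5.2]{angel-holroyd-martin-wilson-winkler13}, apply Theorem~\ref{1a_thm} with $p=1/n$ to get $k\le n^2/(n+\log n)=(n-\log n)+(\log n)^2/(n+\log n)$, and bound the remainder (your strict bound $(\log n)^2<n$ is in fact a slightly cleaner way to land on $\ceil{n-\log n}$ than the paper's $(\log n)^2/(n+\log n)\le 1$ for $n\ge 3$, since it sidesteps any worry about integrality of $n-\log n$). One small correction on the $K_n$-to-$K_n^*$ transfer: the holding step must be decided once per round and applied to \emph{all} walkers simultaneously (the ``staying in waves'' modification of \cite{angel-holroyd-martin-wilson-winkler13}), not independently before each individual walker's move, since per-walker independent laziness can desynchronize the walkers from the underlying $K_n$ coupling and create collisions, whereas the simultaneous version preserves both faithfulness and avoidance.
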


\begin{remark}
Here $\log n$ denotes the natural logarithm.
The upper bound $\ceil{n-\log n}$ is strictly less than $n-2$ as soon as $n \geq 21$.
\end{remark}

\begin{remark}
If there exists an avoidance coupling of $k$ walkers on the loopless graph $K_n$, then one of the same size exists on $K_n^*$. 
To see this fact, one can modify any coupling on $K_n$ to one on $K_n^*$ in the following way.
Immediately before each turn of the first walker, it is decided independently with probability $1/n$ that all walkers will stay stationary during the coming round.
The authors of \cite{angel-holroyd-martin-wilson-winkler13} call this modification ``staying in waves".
\end{remark}

Given that $1$-avoidance is a weaker notion, it is natural to expect that the conclusion of Theorem \ref{ac_thm} is not optimal.
Nevertheless, there is separate interest in determining for a fixed positive integer $k$ the largest $p \in (0,1)$ such that $k$ Bernoulli($p$) walkers can be coupled in a $1$-avoidance.
Theorem \ref{1a_thm} provides an upper bound.
In the other direction, \cite[Section 5]{angel-holroyd-martin-wilson-winkler13} shows that if $k \leq \ceil{n/4}$, then any $p \leq 1/n$ is possible.

\section{Background} \label{background}
In this section we highlight several other questions concerning avoidance couplings, as well as two adjacent families of problems.

\subsection{Markovian avoidance couplings} \label{Markovian}
The only published works on avoidance couplings are due to Angel \textit{et al.}~\cite{angel-holroyd-martin-wilson-winkler13}, Feldheim \cite{feldheim17}, and Infeld \cite{infeld16}.
The first two deal exclusively with the case $G = K_n$ or $K_n^*$, and all three give special consideration to couplings that satisfy some type of Markov property.
Following the terminology from \cite{angel-holroyd-martin-wilson-winkler13}, which considers the strongest such property, we say that an avoidance coupling is \textit{Markovian} if the probability distribution of any particular walker's next move is entirely determined by the current configuration of walkers.
The simplicity of the dynamics makes constructing Markovian couplings a greater challenge; indeed, it is not even clear if a Markovian coupling of $k$ walkers always yields one of $k-1$ walkers.
Furthermore, while the aforementioned constructions in \cite{angel-holroyd-martin-wilson-winkler13}---for the special case when $k$ is a power of $2$---are Markovian, it is not known whether they can be extended to larger $n$ without losing this property.
Consequently, it remains an open problem to construct Markovian avoidance couplings on general $K_n$ or $K_n^*$ with a linear number of walkers.

In \cite[Theorem 7.1]{angel-holroyd-martin-wilson-winkler13}, Angel \textit{et al.}~construct Markovian  avoidance couplings of $k \leq n/(8\log_2 n)$ walkers on $K_n^*$, and $k\leq n/(56\log_2 n)$ walkers on $K_n$.
Dropping the Markovian condition, they establish $k \leq \ceil{n/4}$ walkers on $K_n^*$ from extensions of the special cases, 
as mentioned in Section \ref{intro}.
Feldheim \cite[Theorem 1.1]{feldheim17} shows that extensions can also be done for $K_n$, thus allowing $k \leq \ceil{n/4}$ walkers for general $n$, but still without preserving the Markovian property.
These extensions do, however, preserve a weaker property which Feldheim calls \textit{label-Markovian}.
In a label-Markovian avoidance coupling, the walkers need to agree on a random labeling of the vertices at the start of each round, in addition to examining the current configuration at their turns.

A yet weaker Markov property is studied in the thesis of Infeld \cite{infeld16}.
We will say an avoidance coupling is \textit{round-Markovian} if the configuration at the start of each round is a Markov process.
That is, the joint update of all walkers from round to round is Markovian, although individual walkers' moves may not be.
By considering avoidance couplings on graphs other than $K_n$ and $K_n^*$, Infeld is able to introduce a notion not possible on complete graphs: a \textit{uniform} avoidance coupling.
A round-Markovian avoidance coupling is said to be uniform if the transition probabilities for each walker do not depend on the configuration of the other walkers at the start of the round.
In \cite[Chapter 2]{infeld16}, the reader can find a breadth of examples as well as partial characterizations of graphs admitting a uniform avoidance coupling of two walkers.

\subsection{Applications}
Potential applications of avoidance couplings include scenarios in which multiple users want to make decisions based on a random walk, but need to avoid affecting other users.
For instance, a pollster sampling a large population over time may use a random walk to select people to survey.
If several pollsters work simultaneously to increase data collection, these walks must be coordinated to avoid repeated sampling at any given time.
As another example, several background applications might run simultaneously on a computer and access random parts of the hard drive, yet for speed or corruption reasons, they should not access any particular part at the same time \cite{tsianos13}.
Finally, in communication systems it can be necessary for users to periodically change transmission frequencies in order to counteract malicious attempts at interference or interception.
These updates should be random so as to not be predictable, but also separate messengers will not be able to transmit over the same frequency.

In each of these examples, it might further be desirable to make decisions independent of history, either for practicality (e.g.~the first example above), for conservation of memory (the second), or to avoid becoming more predictable as time goes on (the third).
For this reason, Markovian avoidance couplings are of special interest.
Moreover, there are simple cases for which an avoidance coupling exists, but a Markovian version does not (e.g.~see \cite[Theorem 3.1]{angel-holroyd-martin-wilson-winkler13}).

\subsection{Related problems} \label{related}
The task of keeping apart random walkers has also been studied in the context of scheduling problems, which frequently appear in computer science.
In this setting, the interest is solely in avoiding collisions rather than in also maintaining the law of a random walk. 
The moves of independent walkers, usually two, can be delayed by a scheduler, although collisions are still inevitable \cite{coppersmith-tetali-winkler93,tetali-winkler93} unless the scheduler is clairvoyant and knows the full future of both walkers \cite{winkler00,balister-bollobas-stacey00,gacs11,basu-sidoravicius-sly??} (at least on large enough complete graphs).

A problem of a slightly different flavor is that of co-adapted reflected Brownian motions that are ``shy", i.e.~remain a fixed positive distance apart.
The nonexistence of such a process has been established on bounded domains with various regularity properties \cite{bejamini-burdzy-chen06,kendall09,bramson-burdzy-kendall13,bramson-burdzy-kendall14}.

\section{Proofs of Theorems \ref{1a_thm} and \ref{ac_thm}} \label{proofs}
We now prove Theorem \ref{1a_thm} and deduce Theorem \ref{ac_thm} as an easy corollary.

\subsection{Preliminaries} \label{preliminaries}
Let us begin by establishing some notation.
We use the conventions $\N \coloneqq \{1,2,\dots\}$ and $[k] = \{1,\dots,k\}$ for a fixed positive integer $k$.

\begin{defn}
A $1$-avoidance coupling of $k$ Bernoulli($p$) walkers is a $\{0,1\}^k$-valued process $\vc X = (\vc X(t))_{t\in\N} = (X_1(t),\dots,X_k(t))_{t\in\N}$ such that
\begin{itemize}
\item[(i)] (\textit{faithfulness}) for each $i = 1,\dots,k$, $(X_i(t))_{t\in\N}$ is a sequence of i.i.d.~Bernoulli($p$) random variables;
\item[(ii)] (\textit{avoidance}) for every $t \in \N$ and $1 \leq i < j \leq k$, we have $\P(X_i(t) = X_j(t) = 1) = \P(X_{i}(t+1) = X_j(t) = 1) = 0$.
\end{itemize}
\end{defn}

Note that property (ii) includes two conditions.  
First, at each time $t$, the vector $\vc X(t)$ can have at most one coordinate equal to $1$.
Second, at time $t+1$ either the location of that coordinate weakly increases toward $k$ or no coordinate is equal to $1$ (in the latter case, any coordinate is permitted to equal $1$ at time $t+2$).
Because of the first condition, a sample of the coupling can almost surely be represented by an infinite sequence $(a(t))_{t\in\N}$ of characters in the alphabet $\a \coloneqq [k] \cup \{\texttt{B}\}$, where \texttt{B} is the ``blank" placeholder for times at which no coordinate is equal to $1$.
More precisely, we set $a(t) = j$ if and only if $X_j(t) = 1$, and $a(t) = \texttt{B}$ if and only if $X_i(t) = 0$ for all $i\in[k]$.
Then the latter condition translates to the following analog.

\begin{defn} \label{permissible_defn}
We say that a sequence $(a(t))_{t\in\N} \in \a^\N$ is \textit{permissible} if $a(t) \leq a(t+1)$ whenever $a(t),a(t+1) \in [k]$.
\end{defn}

Once we encode a $1$-avoidance coupling as a random, almost surely permissible sequence, it will be useful to consider the ``gaps" between successive occurrences of a given symbol $i \in [k]$.
Indeed, such an occurrence appears with probability $p$ at any given time, independently of all other times.
We thus make the following definitions.

\begin{defn} \label{weight_defn}
Let $\s = (a(t))_{t=1}^T \in \a^\N$ be a finite sequence in the alphabet $\a$.
\begin{itemize}
\item[(a)] We say $t_1 < t_2$ are \textit{neighbors} if there is $i \in [k]$ such that $a(t_1) = a(t_2) = i$ and $a(t) \neq i$ for all $t_1 < t < t_2$.
\item[(b)] The \emph{weight} of a pair of neighbors $t_1 < t_2$ is defined to be $1/b$, where $b \leq k$ is the number of distinct elements of $\a$ appearing in $(a(t))_{t_1 < t < t_2}$.
If $b=0$, we simply set the pair's weight to $0$.
\item[(c)] The \emph{total weight} of $\s$ is the sum of the weights of all pairs of neighbors.
\end{itemize}
\end{defn}

For example, consider the sequence displayed below with $k = 3$ and $T=15$.
There are four pairs of neighbors associated with the symbol ``$3$".
The first such pair is separated by two distinct characters, ``\texttt{B}" and ``$2$", and thus has weight $1/2$.
The second pair has weight $0$ since the two instances of ``3" are immediately adjacent.
The third pair has weight $1$ since the two neighbors are separated by just ``\texttt{B}" (notice that in a permissible sequence such as the one below, the only way for a pair of neighbors to have weight $1$ is to be separated by a string of \texttt{B}'s).
The fourth pair is separated by six characters, although among these there are only three distinct symbols; hence the weight of this final pair is $1/3$.

\begin{table}[!h]
\begin{tabular}{>{$}c<{$}|>{$}c<{$}>{$}c<{$}>{$}c<{$}>{$}c<{$}>{$}c<{$}>{$}c<{$}>{$}c<{$}>{$}c<{$}>{$}c<{$}>{$}c<{$}>{$}c<{$}>{$}c<{$}>{$}c<{$}>{$}c<{$}>{$}c<{$}}
t & 1 & 2 & 3 & 4 & 5 & 6 & 7 & 8 & 9 & 10 & 11 & 12 & 13 & 14 & 15 \\ \hline
a(t) & 1 & 3 & \texttt{B} & 2 & 3 & 3 & \texttt{B} & 3 & \texttt{B} & 1 & \texttt{B} & 2 & \texttt{B} & 1 & 3
\end{tabular}
\end{table}

The crucial lemma for establishing Theorem \ref{1a_thm} is the following.

\begin{lemma} \label{key_lemma}
Let $\s = (a(t))_{t=1}^{T}$ be any finite, permissible sequence in the alphabet $\a$.
Then the total weight of $\s$ is at most the number of \texttt{B}'s in $\s$.
\end{lemma}

\begin{proof}
The argument proceeds by induction on the length $T$ of the sequence. 
For sequences of length one, the statement is trivially true (since there are no pairs of neighbors). 
So henceforth fix a permissible sequence $\s$ and assume the claim has been demonstrated for all shorter sequences. 
We may assume without loss of generality that each of the numbers $1,\dots,k$ occurs at least once in $\s$; otherwise we can simply work on a smaller alphabet.
Furthermore, replacing any instance of \texttt{BB} with just \texttt{B} does not change the total weight of $\s$, and so we may assume $\s$ contains no consecutive \texttt{B}'s.

First consider the case when $\s$ contains a pair of neighbors of weight $0$, i.e.~$a(t) = a({t+1}) = i$ for some $t$ and $i \in [k]$.
By deleting one of these identical characters from the sequence, we obtain a shorter sequence with the total weight unchanged; see Figure \ref{lemma_figure}(b).
Therefore, this case follows by the induction hypothesis, and we may henceforth assume that no two adjacent characters in $\s$ are identical.
Since Definition \ref{permissible_defn} forces the numeric characters of $\s$ to be increasing until $\texttt{B}$ appears, a consequence of this assumption is the following: the symbol $\texttt{B}$ appears at least once between any pair of neighbors.

Next suppose the sequence $\s$ contains a pair of neighbors of weight $1$, i.e. $a(t) = a({t+2}) = i$ and $a({t+1}) = \texttt{B}$.
In this case we replace the pattern $i\texttt{B}i$ by $i$, as in Figure \ref{lemma_figure}(c).
It is easy to see that this modification decreases the total weight by exactly $1$ (one neighbor-pair of $i$'s of weight $1$ has disappeared, while all other weights are unchanged). On the other hand, the number of $\texttt{B}$'s in the sequence has also decreased by one.
As the resulting sequence is shorter, the desired statement again follows by induction.

From the previous two paragraphs, we may assume that $\s$ contains neither a pair of neighbors with weight $0$ or $1$. 
That is, for each pair of neighbors the number $b$ in the definition of weight satisfies $b\geq 2$.
We can now calculate the total weight of $\s$ in two different ways.
On one hand, for each $i \in [k]$ we can simply sum all the weights of neighbor-pairs of $i$'s.
Let us call this quantity the \textit{output} of $i$, and then the total weight of $\s$ is the sum of the outputs.

Let us next imagine a second way of summing weights. 
Consider any pair of neighbors in $\s$. 
Let $b$ denote the number of distinct symbols occurring in between, so that the weight of the neighbor-pair is $1/b$. 
Recall that one of the symbols must be \texttt{B} by our earlier assumption, hence there are exactly $b-1$ different numbers appearing between the two neighbors. 
Now imagine that the pair of neighbors ``donates" weight
$1/(b(b-1))$ to each of these $b-1$ different numbers (note that these donated weights sum precisely to the pair's weight, $1/b$). 
Imagine this redistribution happens for all pairs of neighbors so that all the weights from the neighbor-pairs are distributed among the numbers $1,\dots,k$. 
Note that a neighbor-pair consisting of two $i$'s does not donate any weight to the number $i$ itself. 
For each $i\in[k]$, let the \emph{input} of $i$ be the sum of all weights that $i$ receives in this redistribution process, e.g.~Figure \ref{lemma_figure}(d).

\begin{figure}[!ht]
\centering
\includegraphics[width=0.5\textwidth]{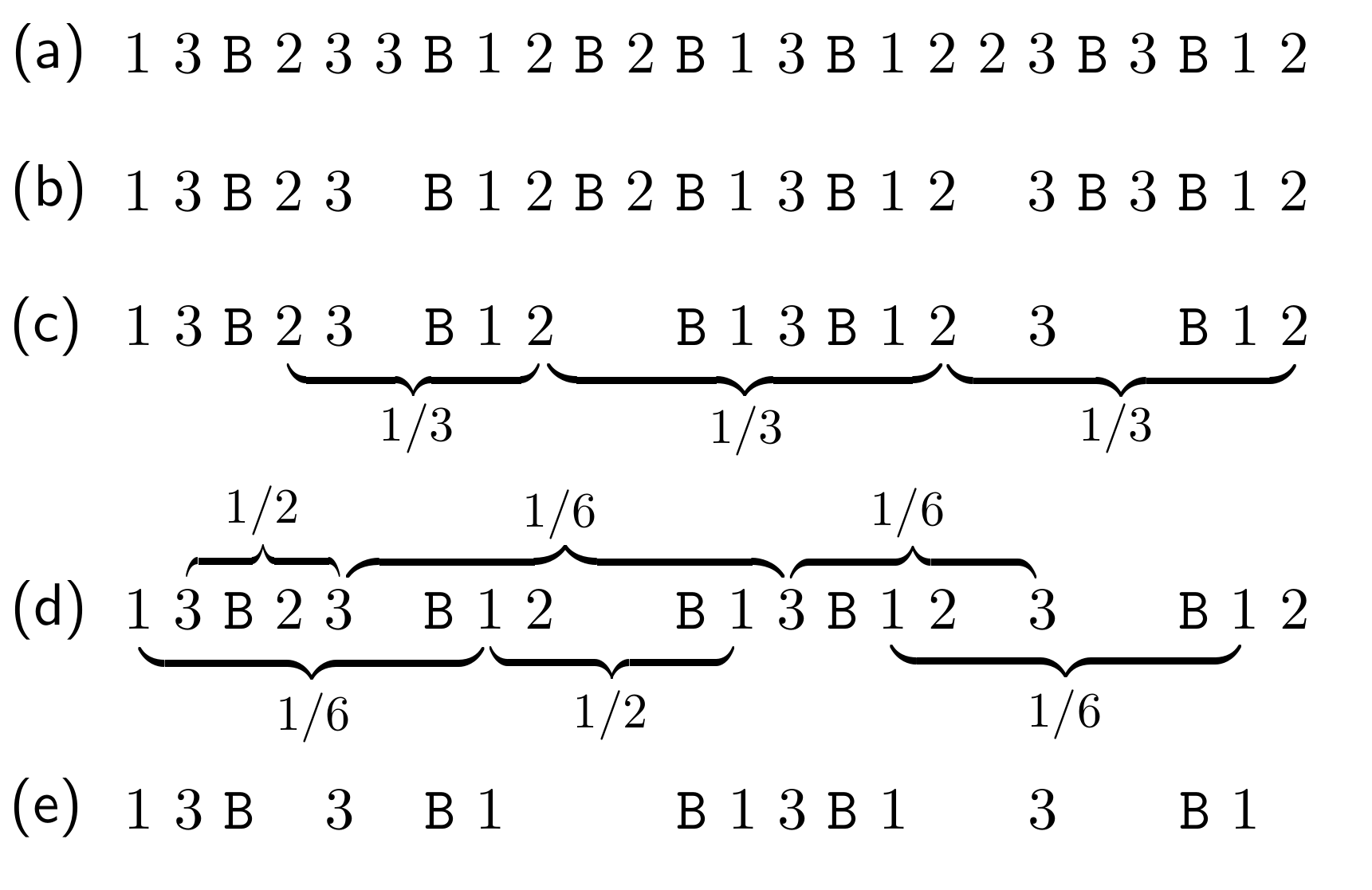}
\caption{{Reductions in the proof of Lemma \ref{key_lemma}.
\textbf{{(a)}} The original sequence $\s$.
\textbf{{(b)}} All weight-$0$ pairs removed.
\textbf{{(c)}} All weight-$1$ pairs removed; the output of $2$ is computed as $1/3+1/3+1/3 = 1$.
\textbf{{(d)}} The input of $2$ is computed as $2(1/2+1/6+1/6) = 5/3$.  Weights donated from $1$ are shown on bottom, those from $3$ on top.
\textbf{{(e)}} Since $5/3 > 1$, the symbol $2$ can be removed to produce the shorter permissible sequence $\s'$ with a greater total weight.
}}
\label{lemma_figure}
\end{figure}

By design, the sum of inputs is equal to the sum of weights over all neighbor-pairs, which in turn is equal to the sum of outputs.
Hence, there must be some $j\in [k]$ such that the input of $j$ is at least the output of $j$.
Let us now delete all $j$'s from the sequence $\s$, producing a new permissible sequence $\s'$. 
We claim that the total weight of $\s'$ is at least that of $\s$. 

The neighbor-pairs in $\s'$ are precisely the neighbor-pairs in $\s$ that do not consist of two $j$'s.  So let us consider any neighbor-pair in $\s$ consisting of two $i$'s with $i\neq j$. 
Let the weight of this neighbor-pair in $\s$ be $1/b$, i.e.~there are exactly $b$ different symbols occurring in between in $\s$. 
If none of these $b$ symbols is $j$, then the weight of the neighbor-pair in $\s'$ is also $1/b$. 
If instead $j$ is one of the symbols, then there will be only $b-1$ different symbols between the neighbor-pair in $\s'$.
So this pair's new weight in $\s'$ is $1/(b-1)$, constituting a increase in weight by $1/(b(b-1))$.
In each of the two cases, the weight of the neighbor-pair increases by the same amount said pair donated to $j$ in the weight redistribution process described above: in the first case $0$, in the second case $1/(b(b-1))$.
In summary, the sum of the weights of all neighbor-pairs in $\s'$ is equal to the sum of their weights in $\s$ plus the input of $j$.
More formally,
\eq{
\text{(total weight of $\s'$)} 
&= \sum_{i\in[k]\setminus\{j\}}\sum_{\text{neighbor-pairs of $i$'s}} (\text{weight of neighbor-pair in $\s'$}) \\
&=  \bigg(\sum_{i\in[k]\setminus\{j\}}\sum_{\text{neighbor-pairs of $i$'s}} (\text{weight of neighbor-pair in $\s$})\bigg) + (\text{input of $j$}) \\
&= (\text{total weight of $\s$}) - (\text{output of $j$}) + (\text{input of $j$}).
}
By our choice of $j$, the above equation proves the claim that the total weight of $\s'$ is at least that of $\s$. 

Since $\s$ was assumed to contain at least one occurrence of $j$, the sequence $\s'$ is strictly shorter than $\s$. 
Furthermore, the two sequences contain the same number of \texttt{B}'s, and so we are done by the above claim and induction.
\end{proof}

\subsection{Proof of Theorem \ref{1a_thm}}

Let $(\vc X(t))_{t\in\N}$ be a $1$-avoidance coupling of $k$ Bernoulli($p$) walkers.
Since each coordinate $i$ induces a sequence of i.i.d.~Bernoulli($p$) random variables, the law of large numbers gives
\eeq{ \label{LLN_1}
\lim_{T\to\infty}\frac{1}{T}\sum_{t=1}^{T} \one_{\{X_i(t) = 1\}} = p \quad \mathrm{a.s.}
}
Because no two coordinates can simultaneously equal $1$, we also have
\eq{
\one_{\bigcup_{i=1}^k \{X_i(t) = 1\}} = \sum_{i=1}^k \one_{\{X_i(t) = 1\}} \quad \mathrm{a.s.},
}
and therefore
\eeq{ \label{one_present}
\lim_{T\to\infty}\frac{1}{T}\sum_{t = 1}^{T} \one_{\bigcup_{i=1}^k \{X_i(t) = 1\}}
 = kp \quad \mathrm{a.s.}
}
Since
\eeq{ \label{either_case}
\frac{1}{T}\sum_{t=1}^{T}  \big(\one_{\bigcup_{i=1}^k \{X_i(t) = 1\}} + \one_{\bigcap_{i=1}^k \{X_i(t)=0\}}\big) = 1,
}
the observation \eqref{one_present} immediately gives the trivial bound $p \leq 1/k$.
The correction factor $(1-p\log p)$ will be obtained by considering the more complicated quantity
\eq{
Z \coloneqq \liminf_{T\to\infty} \frac{1}{T}\sum_{t=1}^{T}  \one_{\bigcap_{i=1}^k \{X_i(t)=0\}}.
}
In particular, we seek a lower bound on $Z$.
The rest of the proof is to establish a sufficiently strong bound.

Recall the definitions of Section \ref{preliminaries}.
Let $(a(t))_{t\in\N} \in \a^\N$ be the random sequence obtained from $(\vc X(t))_{t\in\N}$, where $\a = [k] \cup \{\texttt{B}\}$.
In particular, $a(t) = \texttt{B}$ if and only if $X_i(t) = 0$ for all $i \in [k]$,
and so $Z$ is the (lower) asymptotic fraction of times $t$ for which $a(t) = \texttt{B}$.
Consequently, Lemma \ref{key_lemma} suggests we analyze weights of neighbor-pairs.
We make this anaysis precise in the following way.

If $a(t) = j \in [k]$ but $t$ is not the first time $j$ appears, then $a(t)$ is the ``right neighbor" to some ``left neighbor" $a(s) = j$ with $s < t$ and $a(u) \neq j$ for all $s < u < t$.
Let $b$ denote the number of distinct symbols appearing between the two neighbors.
If $b \geq 1$, then let $w_j(t) = 1/b$ be the weight of this pair, and set $w_i(t) = 0$ for all $i \in [k] \setminus \{j\}$.
If either $a(t)$ is the first occurrence of $j$ in the sequence, or $a(t-1) = j$, or $a(t) = \texttt{B}$, then set $w_i(t) = 0$ for all $i\in[k]$.
By Lemma \ref{key_lemma},
\eeq{ \label{weight_ineq}
\sum_{t = 1}^T  \one_{\bigcap_{i=1}^k \{X_i(t)=0\}} = \sum_{t = 1}^T  \one_{\{a(t) = \texttt{B}\}} \geq \sum_{t=1}^T \sum_{i = 1}^k w_i(t),
}
where the rightmost sum is the total weight of the sequence $(a(t))_{t=1}^T$ as defined in Definition \ref{weight_defn}.
It is this quantity for which we can compute an asymptotic lower bound.

Fix $i \in [k]$, and let $t_0 < t_1 < \cdots$ be all the (random) times for which $a(t) = i$, or equivalently $X_i(t) = 1$.
Because $(X_i(t))_{t\in\N}$ is a sequence of i.i.d.~Bernoulli($p$) random variables, the differences $(t_\ell-t_{\ell-1})_{\ell\in\N}$ are also i.i.d., with
\eq{
\P(t_\ell-t_{\ell-1} = b+1) = p(1-p)^b, \qquad b = 0,1,2,\dots.
}
In particular, for each $b$ the law of large numbers guarantees
\eq{
\lim_{L\to\infty} \frac{1}{L}\sum_{\ell=1}^L \one_{\{t_\ell-t_{\ell-1}=b+1\}} = p(1-p)^b \quad \mathrm{a.s.}
}
That is, $p(1-p)^b$ is the limiting proportion of those $a(t) = i$ for which the previous $i$ appeared exactly $b+1$ time steps earlier.
As the limiting proportion of $t$ for which $a(t) = i$ is precisely $p$, as stated in \eqref{LLN_1},
it follows that
\eeq{
\lim_{T\to\infty} \frac{1}{T}\sum_{t=1}^T \one_{\bigcup_{\ell \geq 1} \{t = t_\ell, t_\ell-t_{\ell-1} = b+1\}} = p^2(1-p)^b \quad \mathrm{a.s.} \label{LLN_2}
}
Of course, if $t_\ell - t_{\ell-1} = b+1$ and $b \geq 1$, then $w_i(t_\ell) \geq 1/b$, since $a(t_{\ell-1}+1),\dots,a(t_{\ell-1}+b)=a(t_\ell-1)$ constitute at most $b$ distinct symbols.
Therefore, by \eqref{LLN_2} we have that for any positive integer $N$,
\eq{
\liminf_{T\to\infty} \frac{1}{T}\sum_{t=1}^T w_i(t) 
&\geq \liminf_{T\to\infty} \frac{1}{T}\sum_{t=1}^T \sum_{b = 1}^\infty b^{-1}\one_{\bigcup_{\ell \geq 1} \{t = t_\ell, t_\ell-t_{\ell-1} = b+1\}} \\
&\geq \sum_{b = 1}^N b^{-1} \lim_{T\to\infty} \frac{1}{T} \sum_{t=1}^T \one_{\bigcup_{\ell \geq 1} \{t = t_\ell, t_\ell-t_{\ell-1} = b+1\}} 
=\sum_{b=1}^N \frac{p^2(1-p)^b}{b} \quad \mathrm{a.s.}
}
By allowing $N$ to tend to infinity and making the Taylor series computation
\eq{
\sum_{b = 1}^\infty \frac{p^2(1-p)^b}{b} = -p^2\sum_{b=1}^\infty (-1)^{b+1}\frac{(p-1)^b}{b} = -p^2\log p,
}
we see
\eq{
\liminf_{T \to \infty} \frac{1}{T}\sum_{t=1}^T w_i(t) \geq -p^2\log p \quad \mathrm{a.s.}
}
Since this argument holds for each $i \in [k]$, we conclude 
\eq{
Z \stackrel{\mbox{\scriptsize\eqref{weight_ineq}}}{\geq} 
\liminf_{T\to\infty}\frac{1}{T} \sum_{t=1}^T \sum_{i=1}^k w_i(t) \geq 
\sum_{i = 1}^k \liminf_{T\to\infty}\frac{1}{T}\sum_{t=1}^T w_i(t) \geq -kp^2\log p \quad \mathrm{a.s.},
} 
which in turn gives $kp -kp^2\log p \leq 1$ by \eqref{one_present} and \eqref{either_case}.
\qed


\subsection{Proof of Theorem \ref{ac_thm}} \label{proof_2}
Suppose there exists an avoidance coupling of $k$ walkers on $K_n^*$.
By \cite[Lemma 5.2]{angel-holroyd-martin-wilson-winkler13}, it follows that there is a $1$-avoidance coupling of $k$ Bernoulli($1/n$) walkers.
By Theorem \ref{1a_thm}, we must have
\eq{
\frac{1}{n}\Big(1 + \frac{\log n}{n}\Big) \leq \frac{1}{k} \quad \Rightarrow \quad
k \leq \frac{n^2}{n+\log n} = n - \log n + \frac{\log^2 n}{n+\log n}.
}
To complete the proof, note that $(\log^2 n)/(n+\log n) \leq 1$ for $n\geq 3$.
\qed

\section*{Acknowledgments}
We thank Omer Angel, Sourav Chatterjee, Alex Dunlap, Jacob Fox, and Mark Perlman for useful discussions.
We are especially grateful to Ohad Feldheim and an anonymous referee, whose feedback on earlier drafts helped improve the exposition.

\bibliography{avoidance_couplings}

\end{document}